\newcommand\AAND{\quad\text{and}\quad}
\newcommand\defeq{\mathrel{\vcenter{\baselineskip0.5ex \lineskiplimit0pt \hbox{.}\hbox{.}}} =}
\newcommand\defword[1]{{\boldmath \textbf{#1}}}
\newcommand\FV{\mathrm{FV}}
\newcommand\ol[1]{\overline{#1}}
\newcommand\pbig[1]{\big({#1}\big)}
\renewcommand\phi{\varphi}
\newcommand\Rel[1][R]{\sim_{#1}}
\newcommand\set[2][]{#1\{ #2 #1\}}
\newcommand\Tmcl{\ol{\Tm}}
\newcommand\Tm{\ensuremath{\Lambda}}
\newcommand\too{\twoheadrightarrow}
\newcommand\To{\Rightarrow}
\newcommand\unsim{{\sim}}
\newcommand\wt[1]{\widetilde{#1}}
\newcommand\Y{\mathrm{Y}}
\theoremstyle{definition}
\newtheorem{defn}{Definition}
\theoremstyle{remark}
\theoremstyle{plain}
\newtheorem{lem}[defn]{Lemma}
\newtheorem{propn}[defn]{Proposition}
\newtheorem*{thm}{Theorem}
\let\oldproofname=\proofname
\renewcommand{\proofname}{\rm\bf{\oldproofname}}
\title{Y is a least fixed point combinator}
\author{Joseph Helfer}
\date{\today}
\begin{document}
\maketitle

\vspace{-0.5cm}

\begin{abstract}
  The theory of recursive functions is related in a well-known way to the notion of \emph{least fixed points}, by endowing a set of partial functions with an ordering in terms of their domain of definition.
  When terms in the pure \( \lambda \)-calculus are considered as partial functions on the set of reduced \( \lambda \)-terms, they inherit such a partial order.
  We prove that Curry's well-known fixed point combinator \( \Y \) produces least fixed points with respect to this partial order.
\end{abstract}

\section{Introduction}

Curry's ``paradoxical combinator'' \( \Y \) in the pure lambda calculus, defined as
\begin{equation}\label{eq:Y-defn}
  \Y = \lambda f. \pbig{\lambda g. f(g g)}\pbig{\lambda g. f(g g)},
\end{equation}
is such that \( \Y F \) is a fixed point of \( F \) for any lambda term \( F \), in the sense that \( F(\Y F) \) and \( \Y F \) are \( \beta \)-equivalent.
\( \Y \) is sometimes called the ``least fixed point operator'' because there are models of the pure lambda calculus in which the domain is equipped with a partial order, and in which it the interpretation of \( \Y \) is a function taking each element of the domain to a least fixed point with respect to that partial order (see, e.g., \cite[Theorem~19.3.4]{barendregt-lambda-calculus}).

However, the set of lambda terms itself can be equipped with the usual partial order (or rather, preorder) on partial functions by declaring that \( F \le G \) for lambda terms \( F \) and \( G \) if and only if, for all lambda terms \( M \), if \( F M \) has a \( \beta \)-normal form, then \( G X \) has the same \( \beta \)-normal form (Definition~\ref{defn:the-order}).
One can then ask whether \( \Y F \) is always minimal with respect to this order among fixed points of \( F \).
Surprisingly, this questions does not seem to have appeared in the literature.
In this note, we answer it in the affirmative.

In \S\ref{sec:others}, we briefly discuss the question of other fixed-point combinators.
\\[-0.8em]

\noindent
\textbf{Acknowledgements:}
We thank Henk Barendregt for helpful discussions concerning this paper.

\section{Preliminaries}
Let \( V \) be an infinite set (of ``variables'').
We write \( \Tm \) for the set of \defword{lambda terms} with variables in \( V \), up to \( \alpha \)-equivalence.
We will refer to the elements of \( \Tm \) simply as ``lambda terms'' (or just ``terms'') rather than ``lambda terms up to \( \alpha \)-equivalence''.

Rather than recalling the explicit definition of \( \Tm \) (for which see, e.g., \cite[Chapter 2]{barendregt-lambda-calculus}), we state certain properties which uniquely characterize it.
More precisely, we will state certain properties of (a) \( \Tm \), together with its operations \( V \to \Tm \) (atomic terms), \( \Tm \times \Tm \to \Tm \) (application), and \( V \times \Tm \to \Tm \) (abstraction), (b) the set \( \FV(M) \subset V \) of \defword{free variables} of a lambda term \( M \), and (c) the operation of \defword{substitution} \( M[v \defeq N] \) of lambda terms (\( M \) and \( N \) terms and \( v \) a variable).
These properties determine \( \Tm \) together with the mentioned operations on it uniquely up to isomorphism, and determine the sets \( \FV(t) \) and the substitution operation uniquely:
\begin{enumerate}[(a)]
\item Every term is of exactly one of the forms (i) \( v \) with \( v \in V \), (ii) \( M N \) for \( M, N \in \Tm \), or (iii) \( \lambda v. M \) with \( v \in V \) and \( M \in \Tm \).
In case (i), \( v \) is uniquely determined, and in case (ii), \( M \) and \( N \) are uniquely determined.
For case (iii), if \( \lambda u. M = \lambda v. N \), then \( N = M[u \defeq v] \).
Moreover \( \Tm \) is the least subset of \( \Tm \) closed under the operations (i)-(iii), i.e., we can use structural induction and recursion on \( \Tm \).
\item The set \( \FV(M) \subset V \) of free variables of a term \( M \) is determined by (i) \( \FV(v) = \set{v} \) for \( v \in V \), (ii) \( \FV(M N) = \FV(M) \cup \FV(N) \), and (iii) \( \FV(\lambda u. M) = \FV(M) - \set{u} \).
\item The substitution operation \( M[v \defeq N] \) is determined by (i) \( v[v \defeq M] = M \) and \( u[v \defeq M] = u \) for \( u \in V - \set{v} \), (ii) \( (M_1 M_2)[v \defeq N] = (M_1[v \defeq N])(M_2[v \defeq N]) \), and (iii) \( (\lambda u.M_1)[v \defeq s] = \lambda u.(M_1[v \defeq s]) \) for \( u \in V - (\FV(s) \cup \set{v}) \).

Note that if \( u \in \FV(s) \cup \set{v} \), we can always take some \( u' \in V - (\FV(s) \cup \set{v}) \), and set \( M_1' = M_1[u \defeq u'] \), and we then have \( \lambda u. M_1 = \lambda u'. M_1' \), so the clauses (i)-(iii) indeed suffice to determine the substitution operation.
\end{enumerate}

For a relation \( R \subset \Tm \times \Tm \), we write \( M \sim_R N \) for \( (M,N) \in R \).
We say that \( R \) is \defword{compatible} (see \cite[3.1.1]{barendregt-lambda-calculus}) if given terms \( M,M',N,N' \) with \( M \Rel M' \) and \( N \Rel N' \), then \( M N \Rel M' N \), \( M N \Rel M N' \), and \( \lambda v.N \Rel \lambda v.N' \) for \( v \in V \).
For a relation \( R \), we write \( \to_R \) (\defword{one-step \( R \)-reduction})) for the least compatible relation containing \( R \), we write \( \too_R \) (\defword{\( R \)-reduction}) for the least preorder containing \( \to_R \), and \( \approx_R \) (\defword{\( R \)-equivalence}) for the least equivalence relation containing \( \too_R \) (or equivalently, containing \( \to_R \)).
It follows that \( \too_R \) and \( \approx_R \) are themselves compatible, and are hence the least compatible preorder and equivalence relation containing \( R \), respectively.

The relations \( \beta,\eta \subset \Tm \times \Tm \) are defined as
\[
  \beta = \set{\pbig{(\lambda v. M) N, M[v \defeq N]} \mid v \in V;\ M,N \in \Tm}
  \AAND
  \eta = \set{\pbig{(\lambda v. M v), M} \mid v \in V;\ M \in \Tm},
\]
and we define \( \beta\eta = \beta \cup \eta \).
We thus obtain the relations \( \to_\beta \), \( \too_\beta \), and \( \approx_\beta \) of one-step \( \beta \)-reduction, \( \beta \)-reduction, and \( \beta \)-equivalence, and likewise \( \to_{\beta \eta} \), \( \too_{\beta \eta} \), and \( \approx_{\beta \eta} \).

\begin{lem}\label{lem:compat-subst}
  If \( R \) is either of \( \beta \) or \( \beta \eta \) (or more generally if \( \beta \subset R \)) then
  \[
    M \approx_R M' \To M[v \defeq N] \approx_R M'[v \defeq N]
  \]
  for all \( M,M',N \in \Tm \) and \( v \in V \).
\end{lem}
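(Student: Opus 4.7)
The plan is to exploit the fact that $\beta$-reduction is precisely the computation that realizes substitution, so substitution terms are $R$-equivalent to explicit redexes, which in turn can be manipulated via the compatibility of $\approx_R$.

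Concretely, since $\beta \subset R$, the definition of $\beta$ gives
\[
  (\lambda v. M) N \to_\beta M[v \defeq N] \AAND (\lambda v. M') N \to_\beta M'[v \defeq N],
\]
so both pairs are $\approx_R$-related. Thus it suffices to show $(\lambda v. M) N \approx_R (\lambda v. M') N$, and then chain:
\[
  M[v \defeq N] \approx_R (\lambda v. M) N \approx_R (\lambda v. M') N \approx_R M'[v \defeq N].
\]

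To obtain the middle equivalence, I would apply the compatibility of $\approx_R$ (which was noted in the paragraph following the definition of $\approx_R$ to be the least compatible equivalence relation containing $R$): from $M \approx_R M'$, compatibility in the abstraction clause gives $\lambda v. M \approx_R \lambda v. M'$, and then compatibility in the application clause (on the left with $N$ fixed) gives $(\lambda v. M) N \approx_R (\lambda v. M') N$.

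I do not expect any serious obstacle; the only thing to be careful about is that the definition of $\beta$ as given already allows the pair $((\lambda v. M) N, M[v \defeq N])$ for \emph{any} $v$, $M$, $N$, so no side condition on $v$ versus $\FV(N)$ needs to be checked before invoking $\beta$-reduction (the substitution operation itself takes care of any necessary renaming, per clause~(c) of the characterization of $\Tm$).
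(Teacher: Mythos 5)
Your proposal is correct and follows essentially the same route as the paper: the paper's proof is exactly the chain \( M[v \defeq N] \approx_R (\lambda v. M) N \approx_R (\lambda v. M') N \approx_R M'[v \defeq N] \), with the middle step justified by compatibility of \( \approx_R \) just as you describe. Your version merely spells out the compatibility steps and the side-condition remark in more detail.
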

\begin{proof}
  Assuming \( M \approx_R M' \), we have \( M[v \defeq N] \approx_R (\lambda v. M) N \approx_R (\lambda v. M') N \approx_R M'[v \defeq N] \).
\end{proof}

For the remainder of this section, fix a relation \( R \subset \Tm \times \Tm \).

A term \( M \) is in \defword{\( R \)-reduced}, if there is no term \( N \) with \( M \to_R N \).
An \defword{\( R \)-normal form} of a term \( M \) is an \( R \)-reduced term \( N \) with \( M \approx_R N \).
We say that \( R \) has the \defword{Church-Rosser property} if for all \( M,N,N' \in \Tm \), if \( M \too_R N \) and \( M \too_R N' \), then there is \( L \in \Tm \) with \( N \too_R L \) and \( N' \too_R L \).
This implies that every term has at most one \( R \)-normal form, and that if \( N \) is the \( R \)-normal form of \( M \), then \( M \too_R N \).
By the Church-Rosser theorem, \( \beta \) and \( \beta \eta \) both have the Church-Rosser property \cite[\S11.1]{barendregt-lambda-calculus}.

The \defword{Y-combinator} is the lambda term \( \Y \) defined in \eqref{eq:Y-defn}.

\begin{defn}\label{defn:the-order}
  We define a preorder \( \le_R \) on terms by putting, for \( F,G \in \Tm \):
  \[
    F \leq_R G \iff ( \forall M \in \Tmcl.\ F M \text{ has an \( R \)-normal form} \To G M \text{ has the same \( R \)-normal form} ).
  \]
\end{defn}

\section{The proof}
\begin{thm}($\Y$ yields $R$-minimal fixed points.)
  Let \( R \) be either \( \beta \) or \( \beta \eta \). Then
\[\forall F,M \in \Tmcl.\ FM \approx_R M \Rightarrow \Y F \leq_R M.\]
\end{thm}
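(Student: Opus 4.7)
The plan is to introduce a fresh variable $y$ as a placeholder for $U := \tilde F \tilde F$ (where $\tilde F := \lambda g.\, F(gg)$), which is $YF$'s one-step $\beta$-reduct and satisfies $U \to_\beta F(U)$. The given assumption and Church-Rosser applied to $(YF)N \to_\beta UN$ yield $UN \too_R L$; the aim is to conclude $MN \approx_R L$, which by Church-Rosser means $L$ is the $R$-normal form of $MN$.

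Let $\to_{R,y}$ denote $\to_R$ augmented with the formal rewrite $y \to F(y)$. The central technical step is to lift $UN \too_R L$ to a sequence
\[
yN = S_0 \to_{R,y} S_1 \to_{R,y} \cdots \to_{R,y} S_k = L
\]
in which each step is either an $R$-reduction or an unfolding $y \to F(y)$, and such that $S_i[y := U]$ recovers a valid reduction $UN \too_R L$. This is constructed by induction on the reduction length, mirroring each step: an unfolding $U \to_\beta F(U)$ at a residual of the original $U$ becomes $y \to F(y)$, while other reductions are mirrored as $R$-reductions in the $y$-version (with $y$'s standing in for $U$-residuals). Since $U$ is itself a $\beta$-redex, the $R$-reduced $L$ contains no $U$-residuals, hence $y \notin \FV(L)$. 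Now substituting $y := M$ throughout the sequence: each $R$-reduction remains an $R$-reduction (variable substitution commutes with reduction), while each $y \to F(y)$ becomes $M \to F(M)$, an $\approx_R$-equivalence by compatibility of $\approx_R$ together with the hypothesis $FM \approx_R M$. Concatenating these gives $MN = S_0[y := M] \approx_R S_k[y := M] = L$.

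The main technical obstacle is rigorously tracking $U$-residuals across reductions. Mirroring is straightforward for $\beta$-reductions that duplicate, erase, or leave residuals untouched; the subtle cases are reductions strictly inside a residual that do not correspond to the head unfolding---for example, internal reductions of $F$ inside a residual, or $\beta$-reductions engaging the outer $F$ of an unfolded residual when $F$ reduces to a $\lambda$-abstraction. These can be handled by preemptively unfolding $y$ to $F(y)$ so as to expose the needed structure, or by a careful case analysis that augments the $\to_{R,y}$ system as needed. Alternatively, one could bypass the bookkeeping entirely by invoking an approximation-style result showing that some $F^n(y) N \too_R L$ holds with $y \notin \FV(L)$, from which the conclusion follows by substituting $y := M$ and using $F^n M \approx_R M$.
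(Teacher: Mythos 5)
Your overall strategy coincides with the paper's: introduce a fresh variable \( y \) with the formal rule \( y \to F y \), lift the reduction of \( (\Y F)N \) to its normal form into the augmented system with \( y \) standing for residuals of \( \Y F \), observe that the normal form contains no such residuals (so \( y \) does not occur in it), and then substitute \( y := M \), turning each unfolding \( y \to F y \) into the equivalence \( M \approx_R F M \). The substitution step and the ``\( y \notin \FV(L) \)'' step are sound and correspond to the paper's Proposition~\ref{propn:step-1} and Lemmas~\ref{lem:sz-g-sub} and~\ref{lem:reduced-no-fv}.

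The gap is that the ``central technical step'' --- the lifting itself --- is exactly the part you do not carry out, and it is the entire substance of the paper's argument. The difficulty you correctly identify (reductions strictly inside a residual of \( U \), e.g.\ internal reductions of \( F \), or the outer \( F \) of an unfolded residual interacting with its argument) cannot be resolved by ``preemptively unfolding \( y \)'' without a precise bookkeeping device: once a residual has been internally reduced it is no longer a copy of \( U \), so there is no well-defined ``\( y \) standing for it'' unless you specify in advance which terms may occupy that position and how each of them reduces. The paper does this by introducing the set \( \Upsilon_F \) of all possible reducts of residuals (the terms \( \Y_n F' \) and \( \Y_{F',F''} \) of Definition~\ref{defn:upsilon-f}), proving it is closed under one-step reduction up to extracting a prefix \( (F')^{(n)} \) (Lemma~\ref{lem:upsilon-f-reduce}), adjoining one fresh variable \( v_K \) for \emph{each} \( K \in \Upsilon_F \), and then defining the lift \( \gamma_{N,N'} \) by a case analysis on the position of the contracted redex (Definition~\ref{defn:main-constuction}). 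Your alternative --- reducing to an approximation statement \( F^{(n)}(y)N \too_R L \) with \( y \notin \FV(L) \) --- is a genuinely different route in principle, but as stated it is only named, not proved; making it precise requires either the same residual-tracking or heavier machinery (standardization or B\"ohm-tree approximation theorems), and a Genericity-Lemma shortcut does not apply directly since \( \Y_{F,F} \) need not be unsolvable. So the proposal is a correct plan with the crux left open.
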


We first give an outline of the proof.
Given a fixed point \( M \) of \( F \), and a term \( N \) such that \( (\Y F) N \) has a normal form \( N' \), we must show that \( M N \) has the same normal form.
The idea is that the reduction \( (\Y F) N \too_R N' \) should use ``nothing about'' \( \Y F \) \emph{except} that it is a fixed point \( F \).
Thus, we introduce a new variable \( y \) which we consider as a ``formal fixed point of \( y \)'' by introducing a new reduction rule \( y \to F y \), and show (i) that the reduction \( F N \too_R N' \) gives rise to a parallel reduction \( y N \too N' \), and (ii) that the reduction \( y N \too N' \) gives rise to a parallel reduction \( M N \to N' \).
This second step is easy, by substituting \( M \) for \( y \) in the reduction \( y N \too N' \).

Step (i) is a little trickier, since in the reduction \( (\Y F) N \too N' \), we can make reductions \emph{inside of} \( \Y F \), whereas we cannot make reductions ``inside of \( y \)''.
This is taken care of by allowing each instance of \( y \) appearing in the reduction \( y N \too N' \) to ``correspond'' to an arbitrary \emph{reduction} of \( \Y F \), and keeping track of which instances of \( y \) correspond to which such reductions.

We now proceed with the proof.
Henceforth, let \( R \) be one of \( \beta \) or \( \beta \eta \).

\begin{defn}
  For any \( y \in V \) and \( F \in \Tm \), we define the relation \( S_{F,y} \subset \Tm \times \Tm \) by \( S_{F,y} = \set{(y, F y)} \), and we set \( R_{F,y} = R \cup S_{F,y} \).
  We thus have the relations \( \to_{S_{F,y}} \), \( \too_{S_{F,y}} \), and \( \approx_{S_{F,y}} \) of one-step \( S_{f,z} \)-reduction, \( S_{F,z} \)-reduction, and \( S_{F,z} \)-equivalence.
\end{defn}

\begin{lem}\label{lem:sz-g-sub}
  Let \( F \in \Tm \) and \( y \in V - \FV(F) \). Then
  \[
    F M \approx_R M \wedge
    N \approx_{R_{F,y}} N' \To
    N[y \defeq M] \approx_R N'[y \defeq M]
  \]
  for any \( M,N,N' \in \Tm \).
\end{lem}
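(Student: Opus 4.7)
The plan is to reduce the statement to the case of a single reduction step. The set of pairs \( (N_1, N_2) \) with \( N_1[y \defeq M] \approx_R N_2[y \defeq M] \) is an equivalence relation on \( \Tm \) (since \( \approx_R \) is), and since \( \approx_{R_{F,y}} \) is by construction the least equivalence relation containing \( \to_{R_{F,y}} \), it suffices to verify the conclusion whenever \( N \to_{R_{F,y}} N' \).

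Since the compatible closure of a union is the union of the compatible closures, any such step is either a one-step \( R \)-reduction or a one-step \( S_{F,y} \)-reduction. In the first case, \( N \to_R N' \) gives \( N \approx_R N' \), and then \( N[y \defeq M] \approx_R N'[y \defeq M] \) is immediate from Lemma~\ref{lem:compat-subst}.

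For the second case, I would proceed by structural induction on the derivation of \( N \to_{S_{F,y}} N' \) via the compatibility clauses. In the base case \( N = y \) and \( N' = F y \): the hypothesis \( y \notin \FV(F) \) yields \( N[y \defeq M] = M \) and \( N'[y \defeq M] = F M \), and the hypothesis \( F M \approx_R M \) then closes the case. The compatibility cases (left application, right application, abstraction) follow directly from the inductive hypothesis together with the compatibility of \( \approx_R \).

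The one point requiring care is the abstraction case, where in order for substitution to commute with binding one must first \( \alpha \)-rename the bound variable to lie outside \( \FV(M) \cup \set{y} \). Given the substitution conventions set up in \S2 this is routine, so I do not expect any genuine obstacle.
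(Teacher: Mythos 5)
Your proof is correct and takes essentially the same route as the paper's: both reduce to the generating cases, handle the \( R \)-generators via Lemma~\ref{lem:compat-subst}, and handle the single \( S_{F,y} \)-generator by the computation \( y[y \defeq M] = M \approx_R F M = (F y)[y \defeq M] \), using \( y \notin \FV(F) \). The only difference is packaging: the paper observes that \( \{ (N,N') \mid N[y \defeq M] \approx_R N'[y \defeq M] \} \) is a \emph{compatible} equivalence relation and so passes in one step to pairs of \( R_{F,y} \) itself, whereas you peel off the equivalence closure first and then unfold the compatible closure of \( S_{F,y} \) by induction on derivations --- and the \( \alpha \)-renaming care you flag in the abstraction case is exactly the point implicit in the paper's compatibility claim.
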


\begin{proof}
  Since \( \unsim_{R_{F,y}} \) is the least compatible equivalence relation containing \( R_{F,y} \), and since \( \set{(N, N') \mid N[y \defeq M] \approx_R N'[y \defeq M]} \) is itself a compatible equivalence relation, it suffices to prove the conclusion in the case where \( N \Rel[R_{F,y}] N' \).

  If \( N \Rel N' \) (or more generally if \( N \approx_R N' \)), then the conclusion follows immediately from Lemma~\ref{lem:compat-subst}.
  If \( N \Rel[S_{F,y}] N' \), then \( N = y \) and \( N' = F y \), hence \( N [y \defeq M] = M \approx_R F M = N'[y \defeq M] \) since \( y \notin \FV(F) \).
\end{proof}

\begin{lem}\label{lem:reduced-no-fv}
  Let \( F \in \Tm \) and \( z \in V \).
  If \( M \in \Tm \) is \( R_{F,y} \)-reduced, then \( y \notin \FV(M) \).
\end{lem}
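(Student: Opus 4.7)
The plan is to prove the contrapositive: if $y \in \FV(M)$, then $M$ is not $R_{F,y}$-reduced. Since $S_{F,y} \subset R_{F,y}$, we have $\to_{S_{F,y}} \subset \to_{R_{F,y}}$, so it suffices to exhibit some $M'$ with $M \to_{S_{F,y}} M'$. The intuition is simply that the rewrite rule $y \to F y$ turns every free occurrence of $y$ in $M$ into a redex, and compatibility lets us fire it in place.

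I would proceed by structural induction on $M$, using property~(a) above. If $M = v \in V$, then $y \in \FV(M) = \set{v}$ forces $v = y$, and then $M = y \to_{S_{F,y}} F y$ by the defining rule. If $M = M_1 M_2$, then $y \in \FV(M_1) \cup \FV(M_2)$, so (without loss of generality) $y \in \FV(M_1)$; by the inductive hypothesis $M_1 \to_{S_{F,y}} M_1'$ for some $M_1'$, and compatibility gives $M_1 M_2 \to_{S_{F,y}} M_1' M_2$. Finally, if $M = \lambda v. M_1$, then $y \in \FV(M_1) \setminus \set{v}$, so $y \in \FV(M_1)$ and the inductive hypothesis yields $M_1 \to_{S_{F,y}} M_1'$; compatibility again gives $\lambda v. M_1 \to_{S_{F,y}} \lambda v. M_1'$.

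There is no real obstacle here: the whole content is that $\to_{S_{F,y}}$ is the least compatible relation containing the single pair $(y, F y)$, so compatibility is exactly what we need to push a reduction past applications and abstractions. The only mild subtlety is the abstraction case, where one has to verify that $y \ne v$ (which is automatic from $y \in \FV(\lambda v. M_1) = \FV(M_1) \setminus \set{v}$) so that the bound variable does not interfere; but this is immediate from the definition of $\FV$ in~(b).
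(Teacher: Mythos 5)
Your proof is correct and is essentially the paper's own argument: the paper proves the lemma "by induction on \( M \), using that \( y \) is not \( R_{F,y} \)-reduced," which is exactly your structural induction pushing the redex \( y \to_{S_{F,y}} F y \) through applications and abstractions via compatibility. You have merely written out the details (including the harmless check that \( y \neq v \) in the abstraction case) that the paper leaves implicit.
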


\begin{proof}
  By induction on \( M \), using that \( y \) is not \( R_{F,y} \)-reduced.
\end{proof}

We now dispense with ``step (ii)'' from the proof outline above.

\begin{propn}\label{propn:step-1}
  Let \( F,M,N \in \Tm \) and suppose \( F M \approx_R M \).
  For any \( y \in V - \FV(F) \cup \FV(M) \cup \FV(N) \), if \( y N \) has an \( R_{F,y} \)-normal form, then \( M N \) has the same \( R \)-normal form.
\end{propn}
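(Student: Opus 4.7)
The plan is to show that Lemmas~\ref{lem:sz-g-sub} and~\ref{lem:reduced-no-fv} are tailored to make this proposition fall out by substituting \( M \) for \( y \). Let \( N' \) denote the \( R_{F,y} \)-normal form of \( y N \), so \( y N \approx_{R_{F,y}} N' \) and \( N' \) is \( R_{F,y} \)-reduced. I want to claim that \( N' \) is also the (unique) \( R \)-normal form of \( M N \).

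For the \( \approx_R \) part: Lemma~\ref{lem:sz-g-sub} applies, since its hypothesis \( F M \approx_R M \) is assumed and its side condition \( y \notin \FV(F) \) is part of the hypothesis on \( y \); it gives \( (y N)[y \defeq M] \approx_R N'[y \defeq M] \). On the left, since \( y \notin \FV(N) \), the substitution simplifies to \( (y N)[y \defeq M] = M N \). On the right, Lemma~\ref{lem:reduced-no-fv} tells us that \( y \notin \FV(N') \) (because \( N' \) is \( R_{F,y} \)-reduced), so \( N'[y \defeq M] = N' \). Combining, \( M N \approx_R N' \).

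For the ``normal form'' part: since every one-step \( R \)-reduction is also a one-step \( R_{F,y} \)-reduction (as \( R \subset R_{F,y} \)), any term that is \( R_{F,y} \)-reduced is a fortiori \( R \)-reduced; so \( N' \) is \( R \)-reduced. Combined with \( M N \approx_R N' \), this says \( N' \) is an \( R \)-normal form of \( M N \), and uniqueness is guaranteed by the Church-Rosser property of \( R \).

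There is no substantial obstacle here: the two preceding lemmas were designed precisely to make this step of the argument a direct composition. The only minor point worth flagging is that one needs the side condition \( y \notin \FV(N) \) from the hypothesis to ensure \( (y N)[y \defeq M] = M N \); the hypotheses \( y \notin \FV(F) \) and \( y \notin \FV(M) \) are used to apply Lemma~\ref{lem:sz-g-sub} and (implicitly, in the simplification \( (yN)[y \defeq M] = MN \)) to avoid capture when substituting under the abstractions of \( N \).
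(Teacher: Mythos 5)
Your proof is correct and follows essentially the same route as the paper: substitute \( M \) for \( y \) throughout, using Lemma~\ref{lem:sz-g-sub} to preserve \( R \)-equivalence and Lemma~\ref{lem:reduced-no-fv} to see the normal form is untouched by the substitution. The only (immaterial) difference is that you apply Lemma~\ref{lem:sz-g-sub} once to the whole equivalence \( yN \approx_{R_{F,y}} N' \), whereas the paper unfolds the reduction sequence and applies it step by step.
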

\begin{proof}
  If \( y N \) has an \( R_{F,y} \)-normal form, then there is a sequence \( y N = N_0, \ldots, N_n \) where \( N_n \) is \( R_{F,y} \)-reduced and \( N_i \to_{R_{F,y}} N_{i+1} \) for each \( i < n \).
  Now consider the sequence \( N_0[y \defeq M],\ldots,N_n[y \defeq M] \).
  We have \( N_0[y \defeq M] = M N \), and by Lemma~\ref{lem:reduced-no-fv}, \( N_n[y \defeq s] = N_n \), which is \( R_{F,y} \)-reduced and hence \( R \)-reduced.
  By Lemma~\ref{lem:sz-g-sub}, \( N_i[y \defeq M] \approx_R N_{i+1}[y \defeq M] \) for all \( i \), and hence \( M N \approx_R N_n \), as desired.
\end{proof}

We now proceed to ``step (i)'' in the above proof outline.
We begin with some preliminary definitions.
As indicated in the outline, we will need to get a handle on what arbitrary \( R \)-reductions of \( \Y F \) look like.

\begin{defn}\label{defn:upsilon-f}
  Given \( M,N \in \Tm \), let \( \Y_{M,N} \) be the term \( \pbig{\lambda g. M (g g)} \pbig{\lambda g. N (g g)} \), where \( g \notin \FV(M) \cup \FV(N) \), so that \( \Y = \lambda f. \Y_{f,f} \).

  For \( n \ge 0 \) and \( M,N \in \Tm \), we define \( M^{(n)} N \) recursively by \( M^{(0)}N = N \) and \( M^{(n+1)} N = M (M^{(n)} N) \).
  For \( n \ge 0 \), define \( \Y_n \) to be the term \( \lambda f. f^{(n)} \Y_{f,f} \), so that \( \Y_0 = \Y \).

  Finally, for \( F \in \Tm \), we let
  \[
    \Upsilon_F =
    \set{ \Y_{n} F' \mid F \too_R F', n \ge 0 }
    \cup
    \set{ \Y_{F',F''} \mid F \too_R F', F \too_R F'' }
    \subset \Tm.
  \]
\end{defn}

\begin{lem}\label{lem:upsilon-f-reduce}
  Fix \( F \in \Tm \).

  Let \( M \in \Upsilon_F \) and suppose \( M \to_R M' \) for some \( M' \in \Tm \).

  Then \( M' = (F')^{(n)} N \) for some \( F' \in \Tm \) with \( F \too_R F' \), some \( N \in \Upsilon_F \), and some \( n \ge 0 \).
\end{lem}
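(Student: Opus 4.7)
The plan is a case analysis on the form of $M \in \Upsilon_F$ and on the position of the single contraction in the step $M \to_R M'$. Both possible forms of $M$ are applications, so in each case I would consider separately the top-level reduction, reductions inside the head, and reductions inside the argument. The key observation that makes the case analysis short is that the subterms $\Y_{f,f}$ and $\lambda g.\, f(gg)$ are very rigid: they admit essentially no internal reductions at all.

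First suppose $M = \Y_n F' = (\lambda f.\, f^{(n)} \Y_{f,f}) F'$ with $F \too_R F'$. A top-level $\beta$-contraction gives $(F')^{(n)} \Y_{F',F'}$, which is of the required form since $\Y_{F',F'} \in \Upsilon_F$. A contraction inside $F'$ gives $\Y_n F''$ for some $F'' $ with $F \too_R F''$, which is in $\Upsilon_F$ and so of the required form with $n$ taken to be $0$. A contraction inside $\Y_n$ must take place in the body $f^{(n)} \Y_{f,f}$; a short induction on $n$ shows that the only possibility is $\Y_n \to_R \Y_{n+1}$, since inside $\Y_{f,f} = (\lambda g.\, f(gg))(\lambda g.\, f(gg))$ the only redex is the top-level $\beta$-redex (the body $f(gg)$ admits no internal reductions, and it is not of the form $T g$ with $g \notin \FV(T)$, ruling out $\eta$). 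Hence $M' = \Y_{n+1} F' \in \Upsilon_F$.

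Now suppose $M = \Y_{F',F''}$ with $F \too_R F', F''$. A top-level $\beta$-contraction gives $F' \Y_{F'',F''} = (F')^{(1)} \Y_{F'',F''}$, of the required form. A reduction inside either factor $\lambda g.\, F'(gg)$ or $\lambda g.\, F''(gg)$ must, by the same rigidity argument as above, come from a reduction of $F'$ or $F''$ itself, yielding $\Y_{\widetilde F, F''}$ or $\Y_{F', \widetilde F}$ in $\Upsilon_F$ (using transitivity of $\too_R$).

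The only non-routine step is the verification that internal reductions of $\Y_n$ are exhausted by the passage to $\Y_{n+1}$. This reduces to the observation that $\Y_{f,f}$ has exactly one redex and that no $\eta$-redex is hidden in the abstractions $\lambda g.\, f(gg)$; everything else is direct unfolding of the definition of $\Upsilon_F$.
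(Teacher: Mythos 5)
Your proposal follows the same route as the paper's own proof: the same induction showing that the only internal reduction of \( \Y_n \) is \( \Y_n \to_R \Y_{n+1} \), and the same case analysis on the position of the contracted redex in \( \Y_n F' \) and \( \Y_{F',F''} \). You are in fact somewhat more explicit than the paper about why no \( \eta \)-redexes lurk in the abstractions.

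However, there is a gap in the step you dispatch ``by the same rigidity argument as above,'' and the paper's own proof shares it. The rigidity of \( \Y_{f,f} \) depends on \( f \) being a \emph{variable}: then \( f(gg) \) contains no redex. In \( \Y_{F',F''} = (\lambda g.\, F'(gg))(\lambda g.\, F''(gg)) \) the bodies are \( F'(gg) \) and \( F''(gg) \) with \( F', F'' \) arbitrary reducts of \( F \), and if \( F' \) is a \( \lambda \)-abstraction then \( F'(gg) \) is itself a \( \beta \)-redex, giving a one-step reduct of \( \Y_{F',F''} \) that is neither \( F'(\Y_{F'',F''}) \) nor of the form \( \Y_{\wt F, F''} \) or \( \Y_{F',\wt F} \). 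Concretely, for \( F = \lambda x.x \) one has \( \Y_{F,F} \to_\beta (\lambda g.\, gg)(\lambda g.\, (\lambda x.x)(gg)) \), and this term is not of the form \( (F')^{(n)} N \) with \( N \in \Upsilon_F \) for any reduct \( F' \) of \( F \) and any \( n \ge 0 \) --- so in this case the statement itself fails, not merely the argument. (The paper's proof asserts without comment that the only one-step reductions of \( \Y_{K,L} \) are \( K(\Y_{L,L}) \), \( \Y_{K',L} \) and \( \Y_{K,L'} \), overlooking the same redex; the case \( M = \Y_n F' \) is unaffected, since there the only non-variable subterm of the first factor that can move is \( \Y_{f,f} \).) Repairing this requires enlarging \( \Upsilon_F \) (and restating the lemma accordingly) to account for contractions of the redexes \( F'(gg) \) inside the two abstractions; your proof, like the paper's, does not address this.
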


\begin{proof}
  Note first that the unique one-step \( R \)-reduction of \( \Y_{n} \) for \( n \ge 0 \) is \( \Y_{n+1} \), as is proven by induction on \( n \).
  Next, for any \( K, L \in \Tm \), the only one-step \( R \)-reductions of \( \Y_{K,L} \) are \( K(\Y_{L,L}) \) and \( \Y_{K',L} \) or \( \Y_{K,L'} \) with \( K \to_R K' \) and \( L \to_R L' \).

  Now, let \( M \) and \( M' \) be as in the hypothesis.
  If \( M = \Y_{n} F' \) with \( F \too_R F' \), then the three possibilities for \( M' \) are \( \Y_{n+1} F' \in \Upsilon_F \), \( \Y_{n} F'' \in \Upsilon_F \) where \( F' \to_R F'' \), or finally \( (F')^{(n)} \Y_{F',F'} \), where \( Y_{F',F'} \in \Upsilon_F \).
  If \( M = \Y_{F',F''} \), then \( M' \) is either \( F' \Y_{F'',F''} \) or is of the form \( \Y_{F',F'''} \) or \( \Y_{F''', F''} \) with \( F \too_R F''' \).
\end{proof}

It will be convenient for us to consider an enlarged set \( \wt V = V \cup V' \) of variables, where \( V' \) is some set disjoint from \( V \).
In fact, we take \( V' = V'_F \) to be (isomorphic to) to the set \( \Upsilon_F \) of Definition~\ref{defn:upsilon-f} for some \( F \in \Tm \).
Given \( M \in \Upsilon_F \), we write \( v_M \) for the corresponding element of \( V'_F \).

The variables in \( V'_F \) will allow us to keep track of ``which variables correspond to which instances of (reductions of) \( \Y f \)'' as indicated in the above outline.

\begin{defn}
  Fix \( F \in \Tm \).

  We write \( \wt \Tm_F \) for the set of lambda-terms with variables in \( \wt V_F = V \cup V'_F \).
For \( M \in \wt \Tm_F \), we write \( \wt{\FV}_F(M) \subset \wt V_F \) for the set of free variables, and we set \( \FV(M) = \wt{\FV}_F(M) \cap V \) and \( \FV'_F(M) = \wt{\FV}(M) \cap V'_F \).
Note that \( \Tm = \set{M \in \wt \Tm_F \mid \FV'_F(M) = \emptyset} \subset \wt \Tm_F \).

We define the \emph{realization} map \( \rho \colon \wt \Tm_F \to \Tm \) by substituting \( M \) for each \( v_M \in V'_F \); and given \( y \in V \), we define the \emph{forgetful} or \emph{flattening} map \( \phi_y \colon \wt \Tm_F \to \Tm \) by substituting \( y \) for each variable in \( V'_F \); both \( \rho \) and \( \phi_{y} \) are defined by recursion in an evident manner.
\end{defn}

\begin{lem}\label{lem:subst-subst}
  Fix \( F \in \Tm \), and let \( M, N \in \wt \Tm_F \).
  Then
  \[ \rho(M[v \defeq N]) = \rho(M) [v \defeq \rho(N)] \]
  for any \( v \in V - \FV(F) \), and
  \[ \phi_y(M[v \defeq N]) = \phi_y(M) [v \defeq \phi_y(N)] \]
  for any \( v \in V \) and \( y \in V - \set{v} \).
\end{lem}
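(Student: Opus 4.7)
The plan is to prove both identities by simultaneous structural induction on $M$; they are two instances of the standard fact that substitutions commute, provided one is careful about free variables.

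Before starting the induction, I would record the following free-variable bound: for every $M' \in \Upsilon_F$, $\FV(M') \subseteq \FV(F)$. This is because $\Y_n = \lambda f. f^{(n)} \Y_{f,f}$ is closed, so $\FV(\Y_n F') \subseteq \FV(F') \subseteq \FV(F)$ (using that $R$-reduction does not introduce free variables), and similarly $\FV(\Y_{F',F''}) \subseteq \FV(F') \cup \FV(F'') \subseteq \FV(F)$. In particular, whenever $v \in V - \FV(F)$, one has $M'[v \defeq \rho(N)] = M'$ for every $M' \in \Upsilon_F$.

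The base case splits according to whether $M$ is $v$, some other element of $V$, or an element $v_{M'}$ of $V'_F$. The first two subcases are direct from the definitions of $\rho$, $\phi_y$, and substitution. For $M = v_{M'}$, since $v \in V$ is distinct from $v_{M'}$, the $\rho$-identity reduces to $M' = M'[v \defeq \rho(N)]$, which is exactly the preliminary observation, while the $\phi_y$-identity reduces to $y = y[v \defeq \phi_y(N)]$, which follows from the assumption $y \neq v$. The application case is a one-line unwrapping of the recursive clauses together with the inductive hypothesis. In the abstraction case $M = \lambda u. M_1$, I would invoke $\alpha$-equivalence to choose $u$ distinct from $v$ and $y$ and outside $\FV(N) \cup \FV(\rho(N)) \cup \FV(\phi_y(N))$; with this choice, clause (iii) of substitution applies on both sides of both identities, and the equations follow by the inductive hypothesis applied to $M_1$.

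I do not anticipate a real obstacle beyond bookkeeping. The only illuminating point is the variable subcase $M = v_{M'}$, which pinpoints why $v \notin \FV(F)$ is the right hypothesis for the $\rho$-identity: it is exactly the condition ensuring that substituting $\rho(N)$ for $v$ does not affect any $M' \in \Upsilon_F$. The $\phi_y$-identity escapes this hypothesis because $\phi_y$ collapses all of $V'_F$ to the single $V$-variable $y$, for which $y \neq v$ alone suffices.
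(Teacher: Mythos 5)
Your proof is correct and follows essentially the same route as the paper: structural induction on \( M \), with the crux being the variable case \( M = v_{M'} \), where the hypothesis \( v \notin \FV(F) \) is used via the observation that \( \FV(M') \subseteq \FV(F) \) for all \( M' \in \Upsilon_F \) (the paper states exactly this point, noting that \( v \notin \FV(F) \) is equivalent to \( v \notin \FV(K) \) for all \( K \in \Upsilon_F \)). Your write-up simply fills in the bookkeeping that the paper's one-line proof leaves implicit.
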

\begin{proof}
  By induction on \( M \), using that \( \rho(K) = \phi_y(K) = K \) for \( K \in \Tm \), and that \( K[w \defeq L] = K \) for \( K,L \in \Lambda \) and \( w \in V - \FV(M) \).
  We note that the hypothesis \( v \notin \FV(F) \) is equivalent to \( v \notin \FV(K) \) for all \( K \in \Upsilon_F \), and it is the latter which is actually used in the proof.
\end{proof}

We have the following variant of Lemma~\ref{lem:reduced-no-fv}:
\begin{lem}\label{lem:rho-reduced-no-upsilon-fv}
  Fix \( F \in \Tm \).
  For any \( M \in \wt \Tm_F \), if \( \rho(M) \) is \( R \)-reduced, then \( \FV'_F(M) = \emptyset \) (i.e., \( M = \rho(M) \in \Tm \)).
\end{lem}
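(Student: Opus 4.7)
The plan is to prove the statement by structural induction on $M \in \wt\Tm_F$, in parallel with the proof of Lemma~\ref{lem:reduced-no-fv}: there, the key input was that $y$ is not $R_{F,y}$-reduced; here, the analogous input will be that every $K \in \Upsilon_F$ is not $R$-reduced.

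The auxiliary observation powering the induction is that every $K \in \Upsilon_F$ contains an outermost $\beta$-redex, and is therefore (since $\beta \subseteq R$) not $R$-reduced. This follows by inspecting Definition~\ref{defn:upsilon-f}: for $K = \Y_n F' = (\lambda f. f^{(n)} \Y_{f,f}) F'$, the outer application is a $\beta$-redex, and for $K = \Y_{F',F''} = (\lambda g. F'(g g))(\lambda g. F''(g g))$, so is the outer application.

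With this in hand, the induction proceeds by cases. If $M = v \in V$, the claim is vacuous. If $M = v_K$ for some $K \in \Upsilon_F$, then $\rho(M) = K$ is not $R$-reduced by the observation, contradicting the hypothesis — this case cannot arise. If $M = M_1 M_2$, then $\rho(M) = \rho(M_1) \rho(M_2)$, and any $R$-redex of either factor is also a redex of the product, so both $\rho(M_i)$ are $R$-reduced; the inductive hypothesis applied to each gives $\FV'_F(M) = \FV'_F(M_1) \cup \FV'_F(M_2) = \emptyset$. If $M = \lambda u. M_1$, by $\alpha$-conversion I may arrange that $u \in V$ with $u \notin \FV(K)$ for all $K \in \Upsilon_F$ named by $\FV'_F(M_1)$, so that $\rho(M) = \lambda u. \rho(M_1)$; any redex of $\rho(M_1)$ (including a possible outermost $\eta$-redex in the $R = \beta\eta$ case) persists as a redex of $\rho(M)$, so $\rho(M_1)$ is $R$-reduced, and the inductive hypothesis closes the case.

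Nothing is genuinely obstructive; the only mild care is in the abstraction case, where an $\alpha$-conversion is needed to compute $\rho$ cleanly, and in the auxiliary observation itself, which is a two-line redex check on the two shapes in $\Upsilon_F$.
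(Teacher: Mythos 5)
Your proposal is correct and matches the paper's proof, which is exactly "by induction on \( M \), using that no element of \( \Upsilon_F \) is \( R \)-reduced"; you have simply spelled out the redex check on the two shapes in \( \Upsilon_F \) and the routine induction cases that the paper leaves implicit.
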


\begin{proof}
  By induction on \( M \), using that no element of \( \Upsilon_F \) is \( R \)-reduced.
\end{proof}

Now comes the crucial definition.
As explained in the above outline, we will be considering a sequence of reductions of a term \( \Y F \), and producing a parallel sequence in which each instance of \( \Y F \) or some reduction of it is replaced by some variable \( y \).
The following definition is what lets us lift each step of the first sequence to a step in the second.

\begin{defn}\label{defn:main-constuction}
  Fix \( F \in \Tm \).

  Given \( N,N' \in \Tm \) and \( M \in \wt \Tm_F \) with \( N \to_R N' \) and \( \rho(M) = N \), we define a new term
  \[
    M' = \gamma_{N,N'}(M) \in \wt \Tm_F
  \]
with \( \rho(M') = N' \) and \( \phi_y(M) \too_{R_{F,y}} \phi_y(M') \) for any \( y \in V - \FV(N) = V - \FV(N') \).
  \[
    \begin{tikzcd}[row sep=10pt]
      N \ar[d, "R" near end] & M \ar[l, mapsto, "\rho"'] \ar[r, mapsto, "\phi_y"] & \phi_y M \ar[d, ->>, "R_{F,y}" near end, shorten <=-3pt] \\
      N' & M' \ar[l, mapsto, "\rho"'] \ar[r, mapsto, "\phi_y"] & \phi_y M'
    \end{tikzcd}
  \]

  The definition of \( M' = \gamma_{N,N'}(M) \) is by recursion on \( N \):
  \begin{itemize}[leftmargin=25pt]
  \item If \( N = v \in V \), then \( N' = M = v \), and we set \( M' = v \); then evidently \( \rho(M') = N' \) and \( \phi_y(M) = \phi_y(M') \too_{R_{F,y}} \phi_y(M') \).
  \item If \( N = N_1 N_2 \), we consider several sub-cases:
    \begin{enumerate}[(i)]
    \item If \( M \in V'_F \), then \( N \in \Upsilon_F \), and by Lemma~\ref{lem:upsilon-f-reduce}, \( N' = (F')^{(n)} N_3 \) with \( N_3 \in \Upsilon_F \) and \( F \too_R F' \).
      We then set \( M' = (F')^{(n)} v_{N_3} \), and then have \( \rho(M') = N' \) and \( \phi_y(M) = y \too_{S_{F,y}} F^{(n)} y \too_R (F')^{(n)} y = \phi_y(M') \).
    \item Otherwise, we have \( M = M_1 M_2 \) with \( \rho(M_i) = N_i \) for \( i = 1,2 \).
      \begin{enumerate}[(\roman{enumi}-a)]
      \item If \( N_1 = \lambda v. L \) with \( v \notin \set{y} \cup \FV(F) \) and \( N' = L[v \defeq N_2] \), then note that we cannot have \( M_1 \in V'_F \), since \( \Upsilon_F \) contains no \( \lambda \)-abstraction terms.
        Thus, we must have \( M_1 = \lambda v. K \) with \( \rho(K) = L \), and we set \( M' = L[v \defeq M_2] \).
        We then have \( \rho(M') = \rho(L)[v \defeq \rho(M_2)] = K[v \defeq N_2] \) by Lemma~\ref{lem:subst-subst}, and we have \( \phi_y(M) = \pbig{\lambda v. \phi_y(L)}\pbig{\phi_y(M_2)} \) and \( \phi_y(M') = \phi_y(L)[v \defeq \phi_y(M_2)] \) again by Lemma~\ref{lem:subst-subst}, and hence \( \phi_y(M) \to_\beta \phi_y(M') \).
      \item Otherwise, we have \( N' = N_1' N_2' \) with \( N_i \to_R N_i' \) and \( N_j = N_j' \) where \( \set{i,j} = \set{1,2} \).
        We then set \( M_i' = \gamma_{N_i,N_i'}(M_i) \) and \( M_j' = M_j \), and set \( M' = M_1' M_2' \).
        We then have \( \rho(M') = \rho(M_1') \rho(M_2') = N_1' N_2' = N' \); and we have \( \phi_M(M_i) \too_{R_{F,y}} \phi_y(M_i') \) for \( i = 1,2 \) and hence \( \phi_y(M) \too_{R_{F,y}} \phi_y(M') \).
      \end{enumerate}
    \item If \( N = \lambda v. N_1 \) with \( v \notin \set{y} \cup \FV(F) \), then \( M = \lambda v. M_1 \) where \( \rho(M_1) = N_1 \).
      We again consider sub-cases:
      \begin{enumerate}[(\roman{enumi}-a)]
      \item If \( N_1 = N_2 v \) for some term \( N_2 \), and \( N' = N_2 \) (i.e., if \( R = \beta \eta \) and \( N_1 \to_{\eta} N_2 \)), then \( M_1 = M_2 v \) where \( \rho(M_2) = N_2 \), and we set \( M' = M_2 \).
        We then have \( \rho(M') = N' \) and \( \phi_y(M) = \lambda v. \phi_y(M_2) v \to_\eta \phi_y(M') \).
      \item Otherwise, \( N' = \lambda v. N_1' \) with \( N_1 \to_R N_1' \).
        We then set \( M_1' = \gamma_{N_1,N_1'}(M_1) \) and set \( M' = \lambda v. M_1' \), and we have \( \rho(M') = \lambda v. \rho(M_1') = \lambda v. N_1' = N' \) and \( \phi_y(M) = \lambda v. \phi_y(M_1) \too_{R_{F,y}} \lambda v. \phi_y(M_1') = \phi_y(M') \).
      \end{enumerate}
    \end{enumerate}
  \end{itemize}
  (\emph{End of definition of \( \gamma_{N,N'} \).})
\end{defn}

The following proposition (applied to \( L = y N \)), together with Proposition~\ref{propn:step-1}, immediately implies the theorem.

\begin{propn}\label{propn:main-propn}
  Let \( F,L \in \Tm \) and \( y \in V - \FV(F) \).
  If \( L[z \defeq \Y F] \) has an \( R \)-normal form, then \( L \) has the same \( R_{F,y} \)-normal form.
\end{propn}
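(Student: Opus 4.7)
The plan is to lift the given $R$-reduction of $L[y \defeq \Y F]$ to its normal form $N^*$ into a parallel $R_{F,y}$-reduction $L \too_{R_{F,y}} N^*$ by iterating the construction $\gamma$ of Definition~\ref{defn:main-constuction}. So first I would apply Church-Rosser for $R$ to obtain a concrete chain $N_0 \to_R N_1 \to_R \cdots \to_R N_k$ with $N_0 = L[y \defeq \Y F]$ and $N_k = N^*$.

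Next I would initialize the lift. Since $\Y F = \Y_0 F \in \Upsilon_F$ (taking $n = 0$ and $F' = F$ in Definition~\ref{defn:upsilon-f}), the variable $v_{\Y F} \in V'_F$ exists, so I can set $M_0 \defeq L[y \defeq v_{\Y F}] \in \wt\Tm_F$; note $\rho(M_0) = L[y \defeq \Y F] = N_0$ (since $L$ itself has no $V'_F$-variables) and $\phi_y(M_0) = L$. Then I would iteratively set $M_{i+1} \defeq \gamma_{N_i,N_{i+1}}(M_i)$. The invocation of Definition~\ref{defn:main-constuction} at each step requires $y \notin \FV(N_i)$; this holds because $y \notin \FV(\Y F) = \FV(F)$ gives $y \notin \FV(N_0)$, and one-step $R$-reduction cannot introduce new free variables, so $y \notin \FV(N_i)$ for all $i$. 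The construction then guarantees $\rho(M_i) = N_i$ and $\phi_y(M_i) \too_{R_{F,y}} \phi_y(M_{i+1})$ inductively.

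Concatenating these reductions yields $L = \phi_y(M_0) \too_{R_{F,y}} \phi_y(M_k)$. Because $\rho(M_k) = N^*$ is $R$-reduced, Lemma~\ref{lem:rho-reduced-no-upsilon-fv} forces $M_k \in \Tm$ and hence $M_k = \phi_y(M_k) = N^*$. The only thing left is to confirm that $N^*$ is $R_{F,y}$-reduced (not merely $R$-reduced): the single extra redex type in $S_{F,y}$ is a bare occurrence of $y$, which is excluded because $y \notin \FV(N_0)$ descends to $y \notin \FV(N^*)$ by the same monotonicity. Therefore $N^*$ is the $R_{F,y}$-normal form of $L$.

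The substantive content was already absorbed into the definition of $\gamma$, so the main obstacle here is bookkeeping: explicitly tracking where the hypothesis $y \notin \FV(F)$ is used (both to apply $\gamma$ at every step and to rule out a $y$-redex in $N^*$), and verifying that the initial substitution $L[y \defeq v_{\Y F}]$ really does send $\rho$ to $L[y \defeq \Y F]$ and $\phi_y$ back to $L$, which follows from the fact that $L$ itself contains no variables from $V'_F$.
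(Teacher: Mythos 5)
Your proposal is correct and follows essentially the same route as the paper's own proof: extract a one-step reduction chain from $N_0 = L[y \defeq \Y F]$ to its normal form, initialize $M_0 = L[y \defeq v_{\Y F}]$, iterate $\gamma$ to get $L \too_{R_{F,y}} \phi_y(M_n)$, and conclude via Lemma~\ref{lem:rho-reduced-no-upsilon-fv} together with $y \notin \FV(N_n)$. The extra bookkeeping you flag (propagating $y \notin \FV(N_i)$ along the chain) is a reasonable addition the paper leaves implicit.
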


\begin{proof}
Let \( L \) be a term such that \( N_0 = L[y \defeq \Y F] \) has an \( R \)-normal form.
We thus have a sequence \( N_0, \ldots, N_n \) where \( N_n \) is \( R \)-reduced and \( N_i \to_R N_{i+1} \) for all \( 0 \le i < n \).

Define \( M_0 = L[y \defeq v_{\Y F}] \in \wt \Tm_F \), so that \( \rho(M_0) = N_0 \) and \( \phi_y(M_0) = L \).
Now for each \( 0 \le i < n \), define \( M_{i+1} = \gamma_{N_i,N_{i+1}}(M_i) \); we then have \( \rho(M_{i+1}) = N_{i+1} \) and \( \phi_y(M_i) \too_{R_{F,y}} \phi_y(M_{i+1}) \), and in particular, \( L \too_{R_{F,y}} \phi_y(M_n) \) (see Figure~\ref{fig:si-ti-behaviour}).

\begin{figure}
  \[
    \begin{tikzcd}[
      column sep=15pt,
      row sep = 20pt,
      /tikz/column 1/.append style={anchor=base east},
      /tikz/column 3/.append style={anchor=base west}]
      N_0 \ar[r, "R" {anchor=north west, xshift=4pt}, shorten=3pt]
      & \ \cdots \ar[r, "R" {anchor=north west, xshift=4pt}, shorten=3pt] &
      \ \ \ N_n \\
      M_0,  & \cdots, \ar[u, "\rho"', shorten=5pt] \ar[d, "\phi_z", shorten=5pt] &
      \ \ \ M_n \\
      L = \phi_z(M_0)
      \ar[r, ->>, "R_{F,y}" {anchor=north west, xshift=4pt}, shorten=2pt] &
      \ \  \cdots
      \ar[r, ->>, "R_{F,y}" {anchor=north west, xshift=4pt}, shorten=2pt] &
      \ \ \ \phi_y(M_n)
    \end{tikzcd}
  \]
  \caption{The behaviour of the \( M_i \)'s and \( N_i \)'s in Proposition~\ref{propn:main-propn}.}
  \label{fig:si-ti-behaviour}
\end{figure}

Since \( N_n \) is \( R \)-reduced, \( \FV_F'(M_n) = \emptyset \) by Lemma~\ref{lem:rho-reduced-no-upsilon-fv}, hence \( N_n = M_n = \phi_y(M_n) \) and thus \( L \approx_{R_{F,y}} N_n \).
Since \( N_n \) is \( R \)-reduced and \( y \notin \FV(N_n) \) (this follows from \( N_n \approx_R N_0 = L[y \defeq \Y F] \) and \( y \notin \FV(\Y F) \)), it is also \( R_{F,y} \)-reduced, as desired.
\end{proof}

\section{Other fixed point combinators}\label{sec:others}
There are (infinitely many) other terms \( M \) in the \( \lambda \)-calculus which are fixed-point combinators in the sense that \( F (T F) \approx_R T F \) for all \( F \in \Tm \).
A well-known example is Turing's combinator \( \Theta = \pbig{\lambda x. \lambda y. y (x x y)}\pbig{\lambda x. \lambda y. y (x x y)} \); see \cite[Definition~6.1.4]{barendregt-lambda-calculus}.

In the case of \( \Theta \), it is easy to adapt the above proof to see that it is also a least fixed point combinator.
The main point is to modify Definition~\ref{defn:upsilon-f} in light of the possible \( \beta \)-reductions of \( \Theta F \): the set \( \Upsilon_F \) should now consist of all terms \( \Theta' F' \) with \( \Theta \too_R \Theta' \) and \( F \too_R F' \).
The statement of Lemma~\ref{lem:upsilon-f-reduce} then still holds, the construction in Definition~\ref{defn:main-constuction} goes through verbatim, and the statement and proof of Proposition~\ref{propn:main-propn} go through \emph{mutatis mutandis}, substituting \( \Theta \) for \( \Y \).

The proof could similarly be adapted for other fixed-point combinators; it is just a matter of modifying Definition~\ref{defn:upsilon-f} so that Lemma~\ref{lem:upsilon-f-reduce} remains true.
In fact, in light of \cite[Theorem~19.3.4]{barendregt-lambda-calculus}, it is plausible that \emph{every} fixed point combinator is a least fixed point combinator.
Moreover, given the result of \cite{goldberg-enumerability-of-fp-combinators} giving a recursive enumeration of all fixed point combinators, proving this conjecture is perhaps not out of reach.

\printbibliography
\end{document}